\newtheorem{theorem}{Theorem}[section]
\theoremstyle{definition}
\newtheorem{definition}[theorem]{Definition}
\newtheorem{remark}{Remark}
\newtheorem{example}{Example} 
\title[DNNP METHOD FOR SOLVING MOQP]
      {DOUBLY NONNEGATIVE RELAXATION METHOD FOR SOLVING MULTIPLE OBJECTIVE QUADRATIC PROGRAMMING PROBLEMS}
\author[ yanqin bai and chuanhao guo]{}
\subjclass{Primary: 90C29, 90C26; Secondary: 49M20}
\keywords{Multiple objective programming, quadratic programming,
linear weighted sum method, copositive programming, completely
positive programming}
 \email{yqbai@shu.edu.cn}
 \email{guo-ch@shu.edu.cn}
\thanks{The research is supported by National Natural Science Foundation of
China (No. 11071158) and the Key Disciplines of Shanghai
Municipality (No. S30104).}
\begin{document}
\maketitle

\centerline{\scshape yanqin bai }
\medskip
{\footnotesize
 \centerline{Department of Mathematics, Shanghai
University}
   \centerline{Shanghai 200444, China}
} 

\medskip

\centerline{\scshape chuanhao guo}
\medskip
{\footnotesize
 \centerline{ Department of Mathematics, Shanghai
University}
   \centerline{Shanghai 200444, China}
}

\bigskip

 \centerline{(Communicated by the associate editor name)}

\begin{abstract}
Multicriterion optimization and Pareto optimality are fundamental
tools in economics. In this paper we propose a new relaxation method
for solving multiple objective quadratic programming problems.
Exploiting the technique of the linear weighted sum method, we
reformulate the original multiple objective quadratic programming
problems into a single objective one. Since such single objective
quadratic programming problem is still nonconvex and NP-hard in
general. By using the techniques of lifting and doubly nonnegative
relaxation, respectively, this single objective quadratic
programming problem is transformed to a computable convex doubly
nonnegative programming problem. The optimal solutions of this
computable convex problem are (weakly) Pareto optimal solutions of
the original problem under some mild conditions. Moreover, the
proposed method is tested with two examples and a practical
portfolio selection problem. The test problems are solved by
\texttt{CVX} package which is a solver for convex optimization. The
numerical results show that the proposed method is effective and
promising.
\end{abstract}
\section{Introduction}\label{sec: Introduction}
We consider multiple objective nonconvex quadratic programming
problems as follows
$$\rm{(MOQP)}\begin{array}{llll}
&\min &F(x)=(F_1(x),F_2(x),\ldots,F_p(x))^T&\\
&{\rm s.t.} &Ax=b,\\
&&x\geq 0,&\\
 \end{array}
$$
where $F_i(x)=x^TQ_ix+2c_i^Tx$ and $x\in R^n$ is the decision
variable. $Q_i\in R^{n\times n},\ c_i\in R^n,\ i=1,\ldots,p$,
$A=(a_1,a_2,\ldots,a_m)^T\in R^{m\times n}$ and
$b=(b_1,b_2,\ldots,b_m)\in R^m$ are given data. Without loss of
generality, $Q_i$ is symmetric and not positive semidefinite by
assumption.

Multi-objective programming (MOP) also known as multi-criteria
optimization, is the process of simultaneously optimizing two or
more conflicting objectives subject to certain constraints. (MOP)
problems are found in many fields, such as facility location and
optimal detector design \cite{bv2004}, image processing
\cite{dpbs2009}. Problem (MOQP) is a subclass of (MOP) problem and
arises in portfolio selection \cite{xl2002}, reservoir optimal
operation \cite{yy1991} and so on. Problem (MOQP) also can be viewed
as an extension of multiple objective quadratic-linear programming
(MOQLP) problem for which the objectives are a quadratic and several
linear functions and the constraints are linear functions which were
studied in \cite{ky1997,xl2002}.

Problem (MOP) does not have a single solution that simultaneously
minimizes each objective function. A tentative solution is called
Pareto optimal if it impossible to make one objective function
better off without necessarily making the others worse off. And
problem (MOP) may have many Pareto optimal solutions. For solving
problem (MOP), the linear weighted sum method is one of the most
widely used methods. The main idea is to choose the weighting
coefficients corresponding to objective functions. Then, problem
(MOP) can be transformed to a single objective one, and the Pareto
optimal solutions for problem (MOP) could be found by solving this
single objective problem with the appropriate weights. Ammar
\cite{a2008,a2009} investigates problem (MOQP) with fuzzy random
coefficient matrices. Under the assumption that the coefficient
matrices in objectives are positive semidefinite, some results are
discussed to deduce Pareto optimal solutions for fuzzy problem
(MOQP). However, many practical problems of this class of problems
are nonconvex in general. So, these two methods have certain
limitations in practical applications.

Burer \cite{b2009} proves that a large class of NP-hard nonconvex
quadratic program with a mix of binary and continuous variables can
be modeled as so called completely positive programs (CPP), i.e.,
the minimization of a linear function over the convex cone of
completely positive matrices subject to linear constraints (For more
details and developments of this technique, one may refer to
\cite{b2012,b2009,b2010,lfjwx2011}). In order to solve such convex
programs efficiently, a computable relaxed problem is obtained by
approximation the completely positive matrices with doubly
nonnegative matrices, resulting in a doubly nonnegative programming
\cite{b2010}, which can be efficiently solved by some popular
packages.

Motivated by the ideas of \cite{b2009,b2010}, we propose a new
relaxation method for solving problem (MOQP) by combining with the
linear weighted sum method. First of all, in virtue of the linear
weighted sum method, we first transform problem (MOQP) into a single
objective quadratic programming (SOQP) problem over a linearly
constrained subset of the cone of nonnegative orthant. Since problem
(SOQP) is a nonconvex in general, which is equivalently reformulated
as a completely positive programming problem, which is NP-hard.
Furthermore, a computable relaxed convex problem for this completely
positive programming problem is derived by using doubly nonnegative
relaxation technique, and resulting in a doubly nonnegative
programming (DNNP) problem. Based on the characteristics of optimal
solutions of problem (DNNP), a sufficient condition for (weakly)
Pareto optimal solutions for problem (MOQP) is proposed. Moreover,
the proposed method is tested with two examples and a practical
portfolio selection problem. The test problems are solved by
\texttt{CVX} package, which is a solver for convex optimization. The
numerical results show that the proposed method is effective and
promising.

The paper is organized as follows. In Section \ref{sec:
Preliminaries}, we recall some basic definitions and preliminaries
for (weakly) Pareto optimal solution and the linear weighted sum
method, respectively. In Section \ref{sec: Optimality conditions},
problem (MOQP) is transformed into problem (SOQP) by using the
linear weighted sum method. And some optimality conditions for
problem (MOQP) are established. In order to solve problem (SOQP)
effectively, problem (SOQP) is equivalently reformulated as a convex
problem (CP) which is further relaxed to a computable (DNNP) problem
in Section \ref{sec: Reformulation}. In Section \ref{sec: Numerical
experiments}, numerical results are given to show the performance of
the proposed method. Some conclusions and remarks are given in
Section \ref{sec: Concluding remarks}.
\subsection{Notation and terminology}\label{sec: Notation and terminology}
Let $\Omega=\{x\in R^n: Ax=b, x\geq 0\}$ be the feasible set of
problem (MOQP). Let $R^n_{+}$ (or $R^n_{++}$) denotes the cone of
nonnegative (or positive) vectors with dimension $n$, $S_n$ the cone
of all $n\times n$ symmetric matrices, $S_n^+$ the cone of all
$n\times n$ symmetric positive semidefinite matrices and $(S_n)^+$
the cone of all $n\times n$ symmetric matrices with nonnegative
elements. $C_n^*$ is the cone of all $n\times n$ completely positive
matrices, i.e.,
$$C_n^*=\{X\in R^{{n}\times{n}}:\ X=\sum\limits_{k\in
K}z^k(z^k)^T\}\cup\{0\},$$ where $\{z^k\}_{k\in K}\subset
R^n_+\backslash\{0\}$. For two vectors $x,\ y\in R^n$, $x\circ y$ is
a vector in $R^n$ with $x_iy_i$ is its $i$-th component. For a
matrix $M$, ${\rm{diag}(M)}$ is a column vector whose elements are
the diagonal elements of $M$. Given two conformal matrices $A$ and
$B$, $A\bullet B={\rm trace}(A^TB)$. For a given optimization
problem $(\ast)$, its optimal objective value is denoted by
$\rm{Opt(\ast)}$.
\section{Preliminaries}\label{sec: Preliminaries}
\subsection{Pareto optimal solutions}\label{sec: Pareto optimal solution}
In multi-objective optimization with conflicting objectives, there
is no unique optimal solution. A simple optimal solution may exist
here only when the objectives are non-conflicting. For conflicting
objectives one may at best obtain what is called Pareto optimal
solutions. For the sake of completeness, we restate the definitions
of some types of Pareto optimal solutions and ideal point from
\cite{h1994}.
\begin{definition}\label{def: Pareto optimal solution}
A solution $x^*$ is said to be Pareto optimal solution of problem
(MOQP) if and only if there does not exist another feasible solution
$x\in\Omega$ such that $F_i(x)\leq F_i(x^*)$ for all $i= 1,\ldots,p$
and $F_i(x)<F_i(x^*)$ for at least one index $i$.
\end{definition}
All Pareto optimal points lie on the boundary of the feasible region
$\Omega$. Often, algorithms provide solutions that may not be Pareto
optimal but may satisfy other criteria, making them significant for
practical applications. For instance, weakly Pareto optimal is
defined as follows.
\begin{definition}\label{def: weakly Pareto optimal solution}
A solution $x^*$ is said to be weakly Pareto optimal solution of
problem (MOQP) if and only if there does not exist another feasible
solution $x\in\Omega$ such that $F_i(x)<F_i(x^*)$ for all $i=
1,\ldots,p$.
\end{definition}
\begin{remark}\label{rem: relations for Pareto optimal solution and weakly Pareto optimal solution}
A solution is weakly Pareto optimal if there is no other point that
improves all of the objective functions simultaneously. In contrast,
a point is Pareto optimal if there is no other point that improves
at least one objective function without detriment to another
function. It is obvious that each Pareto optimal point is weakly
Pareto optimal, but weakly Pareto optimal point is not Pareto
optimal.
\end{remark}
In order to illustrate (weakly) Pareto optimal solution intuitively,
two examples are given as follows.
\begin{example}\label{example1 for Pareto optimal solution} Let $\Omega=[0, 3]$, and
$$F_1(x)=\frac{x^2}{2}+1,\ F_2(x)=\left\{\begin{array}{llll}-x+2, & x\in[0, 1],\\
1, & x\in(1, 2],\\
x-1, & x\in(2, 3].\end{array}\right.$$
\end{example}
\begin{figure}[htp]
\begin{center}
\includegraphics[width=3in]{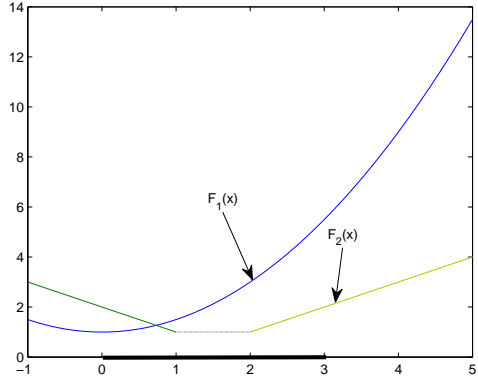}\\
\caption{Pareto optimal solution set [0, 1] and weakly Pareto
optimal solution set [0, 2]}\label{fig: example1 for Pareto optimal
solution}
\end{center}
\end{figure}
The design space for this problem is shown in Figure \ref{fig: example1 for Pareto optimal
solution}.
According to the above two definitions about (weakly) Pareto optimal
solution, we can easily get Pareto optimal solution set for this
problem is $[0, 1]$, and weakly Pareto optimal solution set is $[0,
2]$. Note that each Pareto optimal solution is weakly Pareto optimal
solution for this problem, since $[0, 1]\subseteq[0, 2]$, this also
shows the conclusion holds in Remark \ref{rem: relations for Pareto
optimal solution and weakly Pareto optimal solution}.

\begin{example}\label{example2 for Pareto optimal solution} Let $\Omega=[0, 4]$, and
$$F_1(x)=(x-1)^2+2,\ F_2(x)=(x-2)^2+1,\ F_3(x)=\|x\|.$$
\end{example}
\begin{figure}[htp]
\begin{center}
\includegraphics[width=3in]{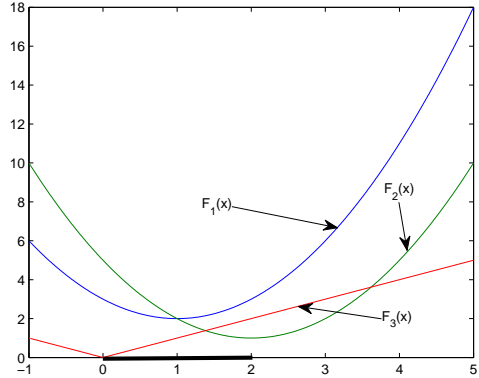}\\
\caption{Pareto optimal solution set [0, 2] and weakly Pareto
optimal solution set [0, 2]}\label{fig: example2 for Pareto optimal
solution}
\end{center}
\end{figure}

For this problem, the graphs for each objective function is plotted
in Figure \ref{fig: example2 for Pareto optimal solution}. By
Definitions \ref{def: Pareto optimal solution} and \ref{def: weakly
Pareto optimal solution}, it holds that Pareto optimal solution and
weakly Pareto optimal solution for this problem are the same, they
all equal to $[0, 2]$.

\subsection{The linear weighted sum method}\label{sec: The linear weighted sum method}
One useful way of getting the efficiency of problem (MOQP) is to
build a utility function \cite{xl2005} according to the decision
makers provided preference information, such that each solution
gained by this method is Pareto optimal solution of problem (MOQP).

The linear weighted function
\begin{equation}\label{eq: 1}
u(F(\cdot)):=\sum\limits_{i=1}^p\lambda_iF_i(\cdot)
\end{equation} is one of
the most widely used utility function, where weight $\lambda_i$
corresponding to objective functions $F_i(\cdot), \ i=1,\ldots,p$
satisfy the following conditions
\begin{equation}\label{eq: 2}
\sum\limits_{i=1}^p \lambda_i=1,\ \lambda_i\geq 0,\
i=1,\ldots,p,\end{equation} which is provided by the decision
makers, and weights $\lambda_i$ imply that the relative importance
for $F_i(\cdot)$ in the heart of the decision makers.

\section{Optimality conditions}\label{sec: Optimality conditions}
In this section, we derive a single objective quadratic programming
(SOQP) problem corresponding to problem (MOQP) by the linear
weighted sum method. And, some optimality conditions for problem
(MOQP) are proposed based on the optimal solutions for problem
(SOQP).

From (\ref{eq: 1}) and (\ref{eq: 2}), problem (MOQP) can be convert
to a single objective quadratic programming problem as follows
$$\rm{(SOQP)}\begin{array}{llll}
&\min &\sum\limits_{i=1}^p \lambda_i(x^TQ_ix+2c_i^Tx)&\\
&{\rm s.t.} &Ax=b,\\
&&x\geq 0.&\\
 \end{array}
$$

Similar to Corollary 3.1 in \cite{xl2005}, we can get the following
theorem, its proof can be found in \cite{xl2005}.
\begin{theorem}\label{theo: Pareto optimal solution}
Let $x^*$ be an optimal solution for problem (SOQP), it follows that
$x^*$ is a Pareto optimal solution (weakly Pareto optimal solution)
for problem (MOQP) if weight $\lambda>0$ ($\lambda\geq 0$,
$\lambda\neq 0$).
\end{theorem}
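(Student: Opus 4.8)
The plan is to prove both implications by contradiction, using the definitions of (weakly) Pareto optimality directly together with the nonnegativity of the weights. Set $u(x) = \sum_{i=1}^p \lambda_i F_i(x)$, so that $x^*$ minimizes $u$ over $\Omega$ by hypothesis.

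First I would handle the weakly Pareto optimal case with $\lambda \geq 0$, $\lambda \neq 0$. Suppose, for contradiction, that $x^*$ is not weakly Pareto optimal; then there exists $x \in \Omega$ with $F_i(x) < F_i(x^*)$ for all $i = 1,\ldots,p$. Multiplying each strict inequality by $\lambda_i \geq 0$ and summing, one gets $\sum_{i=1}^p \lambda_i F_i(x) \leq \sum_{i=1}^p \lambda_i F_i(x^*)$; moreover, since $\lambda \neq 0$ at least one $\lambda_j > 0$, and for that index $\lambda_j F_j(x) < \lambda_j F_j(x^*)$ strictly, so the summed inequality is in fact strict: $u(x) < u(x^*)$. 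This contradicts optimality of $x^*$ for (SOQP). Hence $x^*$ is weakly Pareto optimal.

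Next I would treat the Pareto optimal case with $\lambda > 0$ (all components strictly positive). Suppose $x^*$ is not Pareto optimal; then there is $x \in \Omega$ with $F_i(x) \leq F_i(x^*)$ for all $i$ and $F_k(x) < F_k(x^*)$ for at least one index $k$. Since every $\lambda_i > 0$, multiplying gives $\lambda_i F_i(x) \leq \lambda_i F_i(x^*)$ for all $i$ and $\lambda_k F_k(x) < \lambda_k F_k(x^*)$ strictly. Summing over $i$ yields $u(x) < u(x^*)$, again contradicting the optimality of $x^*$. Therefore $x^*$ is Pareto optimal. Since the paper says this mirrors Corollary 3.1 in \cite{xl2005}, one may simply cite that reference for the full argument, but the self-contained contradiction proof above is short enough to include directly.

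The argument is essentially routine once the definitions are unwound; there is no real obstacle here. The only point requiring a little care is making sure the strictness survives the summation: in the weakly Pareto case the strictness comes from having some $\lambda_j > 0$ paired with a strict componentwise inequality, whereas in the Pareto case it comes from all $\lambda_i$ being strictly positive so that the single strict componentwise inequality at index $k$ is preserved. Beyond that, the quadratic structure of the $F_i$ plays no role — the result holds for arbitrary objective functions — so nothing about $Q_i$, $c_i$, or the feasible set $\Omega = \{x : Ax = b,\ x \geq 0\}$ needs to be invoked.
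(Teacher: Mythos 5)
Your proof is correct and complete: both contradiction arguments are the standard ones, and you correctly isolate where strictness survives the summation in each case (some $\lambda_j>0$ in the weakly Pareto case, all $\lambda_i>0$ in the Pareto case). The paper itself gives no proof, deferring to Corollary 3.1 of \cite{xl2005}, and your argument is essentially that standard one, so there is nothing further to reconcile.
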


According to the above Theorem \ref{theo: Pareto optimal solution},
varying weight $\lambda$ consistently and continuously can result in
a subset of Pareto optimal (weakly Pareto optimal) set for problem
(MOQP). The following theorem presents a sufficient optimality
condition for weakly Pareto solution of problem (MOQP).

Firstly, we quote the following definition which will be used in the
sequel.
\begin{definition}\label{def: convex inclusion condition}
Let $\bar{x}\in\Omega$ and $Y=F(\Omega)$, if there exists a convex
set $H$ such that $$Y\subseteq H,\ {\rm{and}}\ H\cap
(F(\bar{x})-R^p_{++})=\emptyset,$$ then we say that $Y$ satisfies
the convex inclusion condition at $F(\bar{x})$.
\end{definition}
Based on the above convex inclusion condition, we have the following
theorem. The proof is omitted here for the reason that it is similar
to the one of Theorem 3.6 in \cite{xl2005}. 
\begin{theorem}\label{theo: weakly Pareto optimal solution}
Let $x^*$ be a weakly Pareto optimal solution for problem (MOQP),
and $Y$ satisfies the convex inclusion condition at $F(x^*)$. Then
there exists a weight $\bar{\lambda}\geq 0,\ \bar{\lambda}\neq 0$
such that $x^*$ is an optimal solution for problem (SOQP) with
$\bar{\lambda}$.
\end{theorem}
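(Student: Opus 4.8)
The plan is to prove the statement by a separation argument on the convex set $H$ guaranteed by the convex inclusion condition. Since $x^*$ is weakly Pareto optimal, by Definition \ref{def: weakly Pareto optimal solution} there is no $x\in\Omega$ with $F_i(x)<F_i(x^*)$ for all $i$; equivalently, $F(\Omega)\cap(F(x^*)-R^p_{++})=\emptyset$. By hypothesis there is a convex set $H$ with $Y=F(\Omega)\subseteq H$ and $H\cap(F(x^*)-R^p_{++})=\emptyset$. First I would note that $F(x^*)-R^p_{++}$ is a nonempty open convex set disjoint from the convex set $H$, so the separating hyperplane theorem applies: there exist $\bar\lambda\in R^p$, $\bar\lambda\neq 0$, and $\alpha\in R$ such that $\bar\lambda^T y\geq\alpha\geq\bar\lambda^T z$ for all $y\in H$ and all $z\in F(x^*)-R^p_{++}$.

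Next I would extract the two conclusions from this separation. Letting $z=F(x^*)-t e_k$ with $t\to+\infty$ (where $e_k$ is the $k$-th unit vector) forces $\bar\lambda_k\geq 0$ for every $k$, so $\bar\lambda\geq 0$, $\bar\lambda\neq 0$; this is exactly the weight normalization required (after rescaling so that $\sum_i\bar\lambda_i=1$, which does not change the optimization problem (SOQP)). Then, letting $t\downarrow 0$ in $z=F(x^*)-te$ with $e=(1,\dots,1)^T$ gives $\alpha\geq\bar\lambda^T F(x^*)$. Combining with $\bar\lambda^T y\geq\alpha$ for all $y\in Y=F(\Omega)$ yields $\bar\lambda^T F(x)\geq\bar\lambda^T F(x^*)$ for every $x\in\Omega$; that is, $\sum_{i=1}^p\bar\lambda_i F_i(x)\geq\sum_{i=1}^p\bar\lambda_i F_i(x^*)$ for all feasible $x$, which says precisely that $x^*$ solves problem (SOQP) with weight $\bar\lambda$.

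I expect the main obstacle to be the careful handling of the boundary limit that produces $\alpha\geq\bar\lambda^T F(x^*)$: the point $F(x^*)$ itself lies on the boundary of $F(x^*)-R^p_{++}$ rather than in its interior, so one cannot plug it directly into the strict inequality coming from separation and must instead pass to the limit along the closure, using continuity of the linear functional $y\mapsto\bar\lambda^T y$. A secondary technical point is ensuring $\bar\lambda$ can be taken nonzero with the correct sign pattern; this follows from the recession-direction argument above, using that $R^p_{++}$ contains a full-dimensional cone of directions. The remaining steps — invoking the separating hyperplane theorem and rescaling the weights — are standard, and since this argument parallels the proof of Theorem 3.6 in \cite{xl2005}, the details may be suppressed as the authors have done.
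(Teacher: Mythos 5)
Your separation argument is correct and is exactly the standard proof that the paper itself defers to (the argument behind Theorem 3.6 of \cite{xl2005}): separate the convex set $H$ from the open convex set $F(x^*)-R^p_{++}$, read off $\bar\lambda\geq 0$ from recession directions and the optimality inequality from the boundary limit at $F(x^*)$. The only nitpick is that $te_k\notin R^p_{++}$, so the nonnegativity of $\bar\lambda_k$ should be obtained either by first passing to the closure $F(x^*)-R^p_{+}$ or by using directions $te_k+\epsilon e$, a repair your own closure/limit remark already covers.
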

In particular, if problem (MOQP) is convex, Theorem \ref{theo:
weakly Pareto optimal solution} still holds without the convex
inclusion condition.

\section{Reformulation}\label{sec: Reformulation}
Note that problem (SOQP) is a nonconvex quadratic programming
problem in general, and thus it is NP-hard. If problem (SOQP) is
convex with appropriate coefficient $\lambda$, we may use some
popular convex packages to solve it directly. In the following
section, we will establish the computable convex reformulation for
problem (SOQP) when it is nonconvex, and the details are as follows.

\subsection{Completely positive reformulation}\label{sec: Completely positive representation}
Motivated by the ideas in \cite{b2009}, problem (SOQP) can be
reformulated as a completely positive programming problem. First,
the definition of completely positive \cite{bsm2003} is given as
follows.
\begin{definition}\label{def: completely positive}
A symmetric matrix $B$ of order $n$ is called completely positive if
one can find an integer $m$ and a matrix $V$ of size $n\times m$
with nonnegative entries such that $B=VV^T$, where the smallest
possible number $m$ is called the CP-rank of $B$.
\end{definition}
Based on above definition for completely positive, by using the
techniques in \cite{b2009}, problem (SOQP) can be reformulated as
the following completely positive programming problem
$$\rm{(CP)}\begin{array}{llll}
&\min &\sum\limits_{i=1}^p \lambda_i(Q_i\bullet X+2c_i^Tx)\\
&{\rm s.t.} &Ax=b,\\
&&{\rm{diag}}(AXA^T)=b\circ b,\\
&&\left(\begin{array}{lll}1 & x^T \\ x & X\end{array}\right)\in
C_{1+n}^*,
 \end{array}
$$ which is a convex programming problem. Similar to Theorem 2.6 in \cite{b2009}, the following theorem holds
immediately, for more details can be seen in \cite{b2009}.
\begin{theorem}\label{theo: problem (SOQP) and problem (CP)} $\it{Opt}(SOQP)=\it{Opt}(CP)$, and if
$(x^*,X^*)$ is an optimal solution for problem (CP), then $x^*$ is
in the convex hull of optimal solutions of problem (SOQP).
\end{theorem}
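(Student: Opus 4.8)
The plan is to follow Burer's argument from \cite{b2009} adapted to the weighted objective at hand. Write $f(x)=\sum_{i=1}^p\lambda_i(x^TQ_ix+2c_i^Tx)$ and note that with $X=xx^T$ one has $x^TQ_ix=Q_i\bullet X$, so the objectives of (SOQP) and (CP) agree on rank-one lifted points. The two inequalities $\mathrm{Opt}(SOQP)\geq\mathrm{Opt}(CP)$ and $\mathrm{Opt}(SOQP)\leq\mathrm{Opt}(CP)$ will be established separately.

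For the easy direction $\mathrm{Opt}(SOQP)\geq\mathrm{Opt}(CP)$, I would take any feasible $x$ for (SOQP) and set $X=xx^T$. Then $\left(\begin{smallmatrix}1&x^T\\x&X\end{smallmatrix}\right)=\left(\begin{smallmatrix}1\\x\end{smallmatrix}\right)\left(\begin{smallmatrix}1\\x\end{smallmatrix}\right)^T$ lies in $C_{1+n}^*$ since $(1,x^T)^T\in R^{1+n}_+$; the constraint $Ax=b$ is inherited, and $\mathrm{diag}(AXA^T)=\mathrm{diag}(Axx^TA^T)=(Ax)\circ(Ax)=b\circ b$. Hence $(x,X)$ is feasible for (CP) with the same objective value, giving the inequality. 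For the reverse direction one takes an optimal $(x^*,X^*)$ of (CP), writes the completely positive decomposition $\left(\begin{smallmatrix}1&(x^*)^T\\x^*&X^*\end{smallmatrix}\right)=\sum_{k\in K}\left(\begin{smallmatrix}\zeta_k\\u^k\end{smallmatrix}\right)\left(\begin{smallmatrix}\zeta_k\\u^k\end{smallmatrix}\right)^T$ with each $(\zeta_k,(u^k)^T)^T\in R^{1+n}_+$. The $(1,1)$ entry gives $\sum_k\zeta_k^2=1$; the vectors with $\zeta_k=0$ must then have $u^k$ in the recession cone $\{d\geq 0:Ad=0\}$ (this follows from $Ax^*=b$ together with the diagonal constraint forcing $Au^k=0$ for those terms, since the nonnegative quantities $(a_j^Tu^k)^2$ sum to zero after subtracting the $b\circ b$ contributions of the remaining terms), and can be absorbed; rescaling, one writes $x^*=\sum_k\mu_k y^k$ as a convex combination of points $y^k=u^k/\zeta_k$ each feasible for (SOQP). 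Convexity of $X\mapsto Q_i\bullet X$ restricted to the lifted representation — more precisely the identity $Q_i\bullet X^*+2c_i^Tx^*=\sum_k\mu_k\big(Q_i\bullet(y^k(y^k)^T)+2c_i^Ty^k\big)=\sum_k\mu_k f(y^k)$ — shows the (CP) objective at $(x^*,X^*)$ is an average of (SOQP) objective values, hence is $\geq\mathrm{Opt}(SOQP)$; combined with optimality this yields equality and shows every $y^k$ is optimal for (SOQP), so $x^*$ lies in the convex hull of optimal solutions.

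The main obstacle is the bookkeeping for the rank-one terms with $\zeta_k=0$: one must argue carefully that the diagonal constraint $\mathrm{diag}(AX^*A^T)=b\circ b$ together with $Ax^*=b$ forces each such $u^k$ to satisfy $Au^k=0$, so that these terms contribute nothing to the linear constraints and their objective contribution $Q_i\bullet(u^k(u^k)^T)$ is nonnegative-definite-free but can be shown to vanish at optimality (otherwise one could scale it away to strictly decrease the objective, contradicting optimality — this uses that $Q_i$ need not be PSD, so the sign of $u^TQ_iu$ must be handled, and the weighting $\lambda_i\geq 0$ keeps the argument on the feasible side). Once this reduction is done the rest is the convex-combination computation above. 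Since this is entirely parallel to Theorem~2.6 of \cite{b2009} with $F$ replaced by the weighted sum $f$ and no binary variables present, I would cite that source for the technical details and present only the adaptation.
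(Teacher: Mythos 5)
Your proposal is correct and follows exactly the route the paper itself takes: the paper gives no proof at all, simply invoking Theorem~2.6 of \cite{b2009}, and your sketch is a faithful outline of that same Burer argument (rank-one lifting for one inequality, the completely positive decomposition with the Cauchy--Schwarz/recession-direction treatment of the $\zeta_k=0$ terms for the other), likewise deferring the technical bookkeeping to \cite{b2009}. In fact you supply more detail than the paper does, so there is nothing to reconcile beyond tightening the heuristic remarks about why the $\zeta_k=0$ contributions vanish at optimality.
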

According to Theorem \ref{theo: problem (SOQP) and problem (CP)},
problem (SOQP) is equivalent to problem (CP). However, problem (CP)
is NP-hard, since there is a cone $C^*_{1+n}$ constraint, and check
whether or not a given matrix belong to $C^*_{1+n}$ is shown to be
NP-hard \cite{dg2011}, one must relax it in practice. Relaxing
problem (CP) in a natural way yields a doubly nonnegative
programming (DNNP) problem.
\subsection{Doubly nonnegative relaxation}\label{sec: Doubly nonnegative relaxation}
As mentioned above, in order to establish the doubly nonnegative
relaxation for problem (CP), the definition of doubly nonnegative is
given as follows.
\begin{definition}\label{def: doubly nonnegative}
If matrix $Z\in S_n$ is not only nonnegative but also positive
semidefinite, then $Z$ is called doubly nonnegative.
\end{definition}

Note that if $Z\in C^*_{n}$, it necessarily holds that $Z$ is doubly
nonnegative from the above Definitions \ref{def: completely
positive} and \ref{def: doubly nonnegative}. Moreover, the convex
cone $S_n^+$ is self-dual, and so is the convex cone $(S_n)^+$.
Hence, Diananda's decomposition theorem \cite{d1962} can be
reformulated as follows.
\begin{theorem}\label{theo: Diananda's decomposition theorem}For all $n$, we have $C_{n}^*\subseteq S_{n}^+\cap (S_{n})^+$.
The relationship $"\supseteq"$ for two sets
 holds if and only if $n\leq 4$.
\end{theorem}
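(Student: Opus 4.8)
The plan is to dispatch the inclusion $C_n^*\subseteq S_n^+\cap(S_n)^+$ by the one-line observation already made just before the statement, and then to obtain the reverse inclusion for $n\le 4$ by \emph{dualising} Diananda's classical decomposition theorem for copositive matrices, reading off strictness for $n\ge 5$ from an explicit $5\times5$ example. For the inclusion: if $X=\sum_{k\in K}z^k(z^k)^T$ with each $z^k\in R^n_+$, every rank-one term $z^k(z^k)^T$ is positive semidefinite and has entries $z^k_iz^k_j\ge 0$, so any such sum — and the zero matrix — lies in $S_n^+\cap(S_n)^+$; hence $C_n^*\subseteq S_n^+\cap(S_n)^+$ for all $n$, and only the reverse inclusion is at issue.

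For $n\le 4$ I would argue by conic duality under the pairing $A\bullet B=\mathrm{trace}(A^TB)$. Write $\mathrm{COP}_n=\{M\in S_n: y^TMy\ge 0\ \text{for all}\ y\in R^n_+\}$ for the copositive cone. First recall the standard fact that $C_n^*$ is precisely the dual cone of $\mathrm{COP}_n$: for $X=\sum_k z^k(z^k)^T\in C_n^*$ and $M\in\mathrm{COP}_n$ one has $X\bullet M=\sum_k(z^k)^TMz^k\ge 0$, and conversely any $X\in S_n$ with $X\bullet M\ge 0$ for all copositive $M$ lies in $C_n^*$ because the completely positive cone is closed and convex. Diananda's decomposition theorem \cite{d1962} asserts that for $n\le 4$ every copositive matrix decomposes as $M=P+N$ with $P\in S_n^+$ and $N\in(S_n)^+$, i.e. $\mathrm{COP}_n=S_n^++(S_n)^+$. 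Dualising this set identity, and using the elementary rule $(K_1+K_2)^*=K_1^*\cap K_2^*$ for convex cones together with the self-duality of $S_n^+$ and of $(S_n)^+$ recalled just above the theorem, I obtain
\[
C_n^*=\mathrm{COP}_n^*=\bigl(S_n^++(S_n)^+\bigr)^*=(S_n^+)^*\cap\bigl((S_n)^+\bigr)^*=S_n^+\cap(S_n)^+,
\]
which with the inclusion above gives $C_n^*=S_n^+\cap(S_n)^+$ when $n\le 4$.

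For the "only if", i.e. that equality fails for $n\ge 5$, it is enough to treat $n=5$: given a $5\times5$ doubly nonnegative matrix $A_0$ that is not completely positive, the $n\times n$ matrix having $A_0$ in its top-left corner and zeros elsewhere is doubly nonnegative but not completely positive, since in any representation $\sum_k z^k(z^k)^T$ the vanishing of the last $n-5$ diagonal entries forces the last $n-5$ coordinates of every $z^k$ to vanish, producing a completely positive representation of $A_0$. For the $5\times5$ witness I would invoke the classical example in \cite{bsm2003} (a $5\times5$ circulant doubly nonnegative matrix): it is symmetric, entrywise nonnegative, and positive semidefinite, hence doubly nonnegative, while its $\bullet$-pairing with the Horn copositive matrix is strictly negative, which is impossible for a member of $C_5^*$; equivalently, the Horn matrix exhibits the failure of Diananda's decomposition at $n=5$, which by the duality of the previous paragraph is the same as the failure of equality here.

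The genuinely substantive input is Diananda's theorem itself — that every copositive matrix of order at most four is the sum of a positive semidefinite matrix and an entrywise nonnegative symmetric one — which I take as given rather than reprove; the main thing to get right in the present argument is the dual-cone bookkeeping in the $n\le 4$ case (identifying $C_n^*$ with $\mathrm{COP}_n^*$, which rests on closedness of the completely positive cone, and correctly dualising a Minkowski sum of cones), together with checking that the exhibited $5\times5$ matrix is indeed doubly nonnegative yet not completely positive.
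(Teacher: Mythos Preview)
Your argument is correct, but you should know that the paper does not actually prove this theorem: it is presented as a reformulation of Diananda's classical result, with only a citation to \cite{d1962} in lieu of proof. The one piece of content the paper adds is the explicit $5\times 5$ counterexample
\[
M=\begin{pmatrix}1& 1& 0& 0& 1\\ 1& 2& 1& 0& 0\\ 0& 1& 2& 1& 0\\ 0& 0& 1& 2& 1\\ 1& 0& 0& 1& 6\end{pmatrix},
\]
attributed to Diananda, which is doubly nonnegative but not completely positive. So your duality derivation --- identifying $C_n^*$ with $\mathrm{COP}_n^*$ via the bipolar theorem and then dualising Diananda's decomposition $\mathrm{COP}_n=S_n^++(S_n)^+$ for $n\le 4$ --- supplies considerably more than the paper does. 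For the ``only if'' direction you invoke a circulant example from \cite{bsm2003} and the Horn matrix, whereas the paper uses the matrix $M$ above; either route works, though if you want to match the paper you could simply cite $M$ directly. One small point worth making explicit in your write-up: the passage from strict containment $S_5^++(S_5)^+\subsetneq\mathrm{COP}_5$ to strict containment of the duals relies on $S_5^++(S_5)^+$ being closed (so that a separating functional exists), which is true but deserves a word.
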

Regardless of the dimension $n$, one always has the inclusion
$C_n^*\subseteq S_n^+\cap (S_n)^+$. Of course, in dimension $n\geq
5$ there are matrices which are doubly nonnegative but not
completely positive. The counterexample
$$M=\left[\begin{array}{lllllll}1& 1& 0& 0& 1\\
                                 1& 2& 1& 0& 0\\
                                 0& 1& 2& 1& 0\\
                                 0& 0& 1& 2& 1\\
                                 1& 0& 0& 1& 6\end{array}\right]$$
proposed by Diananda \cite{d1962} to illustrates this point.

Replacing $C^*_{1+n}$ by $S_{1+n}^+\cap (S_{1+n})^+$ according to
Theorem \ref{theo: Diananda's decomposition theorem}, problem (CP)
is relaxed to the following doubly nonnegative programming problem
$$\rm{(DNNP)}\begin{array}{llll}
&\min &\sum\limits_{i=1}^p \lambda_i(Q_i\bullet X+2c_i^Tx)\\
&{\rm s.t.} &Ax=b,\\
&&{\rm{diag}}(AXA^T)=b\circ b,\\
&&\left(\begin{array}{lll}1 & x^T \\ x & X\end{array}\right)\in
S_{1+n}^+\cap(S_{1+n})^+,
 \end{array}
$$
which is not only a convex problem but also can be solved in
polynomial time to any fixed precision from the theory of
interior-point methods.

Up to now, problem (MOQP) is reformulated as above problem (DNNP),
which can be solved by some popular package \texttt{CVX}. It is
obviously that problem (DNNP) is a relaxation form for problem
(MOQP).

In the last of this section, we will investigate the relationship
between optimal solutions for problems (MOQP) and (DNNP), i.e., a
sufficient condition for (weakly) Pareto optimal solutions of
problem (MOQP) based on the characteristics of optimal solutions for
problem (DNNP) is established in the following part.
\begin{theorem}\label{theo: optimal solution between problems}
Let $(x^*, X^*)$ be an optimal solution for problem (DNNP). If the
relationship $X^*=x^*(x^*)^T$ holds, then
${\it{Opt(DNNP)}}={\it{Opt(CP)}}.$ Moreover, $(x^*, X^*)$ is an
optimal solution for problem (CP).
\end{theorem}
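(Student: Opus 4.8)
\emph{Proof proposal.} The plan is to use that (DNNP) is a relaxation of (CP) and to show that, under the hypothesis $X^*=x^*(x^*)^T$, the optimal point of the relaxation already lies in the feasible set of (CP), so the relaxation is exact at that point.

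First I would record the easy inequality. By Theorem~\ref{theo: Diananda's decomposition theorem} we have $C_{1+n}^*\subseteq S_{1+n}^+\cap(S_{1+n})^+$, so every $(x,X)$ feasible for (CP) is feasible for (DNNP); since the two problems carry the identical objective $\sum_{i=1}^p\lambda_i(Q_i\bullet X+2c_i^Tx)$, minimizing over the larger feasible set gives $\mathrm{Opt(DNNP)}\le\mathrm{Opt(CP)}$.

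Next I would prove that $(x^*,X^*)$ is in fact feasible for (CP). Set $M^*=\left(\begin{array}{ll}1 & (x^*)^T\\ x^* & X^*\end{array}\right)$. Feasibility for (DNNP) already gives $Ax^*=b$, $\mathrm{diag}(AX^*A^T)=b\circ b$, and $M^*\in S_{1+n}^+\cap(S_{1+n})^+$; these linear and conic conditions are precisely those of (CP) except for the cone membership, so it remains only to check $M^*\in C_{1+n}^*$. This is where the hypothesis $X^*=x^*(x^*)^T$ enters: it forces $M^*=zz^T$ with $z=(1,(x^*)^T)^T$. The membership $M^*\in(S_{1+n})^+$ says all entries of $M^*$ are nonnegative, in particular $x^*\ge 0$, whence $z\in R^{1+n}_+\setminus\{0\}$ and therefore $M^*=zz^T\in C_{1+n}^*$ by the definition of the completely positive cone. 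Thus $(x^*,X^*)$ satisfies all constraints of (CP).

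Finally I would close the loop. Since $(x^*,X^*)$ is feasible for (CP), its objective value is at least $\mathrm{Opt(CP)}$; but that value equals $\mathrm{Opt(DNNP)}$ because $(x^*,X^*)$ is optimal for (DNNP). Combining this with the reverse inequality from the first step yields $\mathrm{Opt(DNNP)}=\mathrm{Opt(CP)}$, and since $(x^*,X^*)$ is a feasible point of (CP) attaining this common value, it is an optimal solution of (CP). The argument has no serious obstacle; the only step deserving a moment's care is verifying $x^*\ge 0$, so that the rank-one matrix $zz^T$ genuinely belongs to $C_{1+n}^*$ — and this is supplied for free by the nonnegativity half of the doubly nonnegative constraint.
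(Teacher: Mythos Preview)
Your proof is correct and follows essentially the same approach as the paper: establish $\mathrm{Opt(DNNP)}\le\mathrm{Opt(CP)}$ from the cone inclusion of Theorem~\ref{theo: Diananda's decomposition theorem}, then use $X^*=x^*(x^*)^T$ to show $(x^*,X^*)$ is feasible for (CP), giving the reverse inequality and hence optimality. You are in fact more explicit than the paper about why the rank-one matrix lies in $C_{1+n}^*$ (the paper simply asserts feasibility), and your remark that $x^*\ge 0$ is guaranteed by the $(S_{1+n})^+$ constraint is a nice clarification.
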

\begin{proof}On one hand, from Theorem \ref{theo: Diananda's decomposition theorem}, it is obviously holds that
\begin{equation}\label{eq: 3}
{\rm{Opt}(CP)}\geq{\rm{Opt}(DNNP)}.
\end{equation}
On the other hand, from $X^*=x^*(x^*)^T$, and constraints of problem
(CP), we have $(x^*, X^*)$ is also a feasible solution for problem
(CP). Since problems (DNNP) and (CP) have the same objective
function, it follows that
\begin{equation}\label{eq: 4}
{\rm{Opt(CP)}}\leq\sum\limits_{i=1}^p\lambda_i(Q_i\bullet
X^*+2c_i^Tx^*)={\rm{Opt(DNNP)}}.
\end{equation}
Thus, combining (\ref{eq: 3}) and (\ref{eq: 4}), we have
$${\rm{Opt(CP)}}={\rm{Opt(DNNP)}}.$$
Again from problems (CP) and (DNNP) have the same objective
function, it holds that $(x^*, X^*)$ is an optimal solution for
problem (CP).
\end{proof}
\begin{remark}\label{rem: optimal solution between problems}
It holds that ${\it{Opt(SOQP)}}={\it{Opt(CP)}}$ from Theorem
\ref{theo: problem (SOQP) and problem (CP)}. Let $(x^*,X^*)$ be an
optimal solution for problem (DNNP), if $X^*=x^*(x^*)^T$, by Theorem
\ref{theo: optimal solution between problems}, we have
${\it{Opt(CP)}}={\it{Opt(DNNP)}}.$ Thus, we get
${\it{Opt(SOQP)}}={\it{Opt(DNNP)}}$ under the condition
$X^*=x^*(x^*)^T$. Furthermore, since problems (SOQP) and (DNNP) have
the same objective function, again from Theorem \ref{theo: problem
(SOQP) and problem (CP)}, we can conclude that $x^*$ is an optimal
solution for problem (SOQP). From Theorem \ref{theo: Pareto optimal
solution}, we further know that $x^*$ is a Pareto optimal solution
(or weakly Pareto optimal solution) for problem (MOQP) if
$\lambda>0$ (or $\lambda\geq 0$).
\end{remark}
\section{Numerical experiments}\label{sec: Numerical experiments}
In this section, in order to show the effectiveness of our proposed
method, some examples are tested and corresponding numerical results
are reported. To solve test problems, we use \texttt{CVX}
\cite{gb2011}, a package for specifying and solving convex programs.
The software is implemented using MATLAB R2011b on Windows 7
platform, and on a PC with Intel(R) Core(TM) i3-2310M CPU 2.10 GHz.

 In the following numerical experiments, three examples are solved by using
the proposed method, respectively. The first example is a given
two-dimension problem, which has four nonconvex objective functions.
The second example is a five-dimension nonconvex problem with five
objective functions. Note that its coefficients are generated by
MATLAB function \texttt{randn($\cdot$)}. The last example is a
practical portfolio selection problem, which is taken from
\cite{xl2002}. The weighted coefficient $\lambda$ is generated by
the following procedure

{\texttt{lambda=zeros(p,1)}};

{\texttt{while lambda(p)==0}}

{\texttt{lambda(1:p-1)=rand(p-1,1); s=sum(lambda)}};

{\texttt{if s<1 lambda(p)=1-s; end}}

{\texttt{end}}\\
 where $p$ is the number of objective functions.
\begin{example}\label{constructed problem}
First of all, a two-dimension problem with four objective functions
is tested. The corresponding coefficients $Q_i,\ c_i,\ i=1,\ldots,4$
and $A,\ b$ are given in Table \ref{tab: Coefficient data for
constructed problem}.
\begin{table}[htbp]
\centering \caption{Coefficients for Example \ref{constructed
problem}}\scriptsize
\begin{tabular}{ lllllllllllllllllllllllllll }
 \hline\hline
   $Q_1=\left[\begin{array}{ll}1 & 3 \\ 3 & 8\end{array}\right]$&
   $Q_2=\left[\begin{array}{lllll}2 & 0\\ 0 & -2\end{array}\right]$&
   $Q_3=\left[\begin{array}{lllll}0 & 2\\ 2 & 0\end{array}\right]$&
   $Q_4=\left[\begin{array}{llllll}-1 & 0\\ 0 & 0\end{array}\right]$\\
 \hline
 $c_1=[2.5\ \ -0.5]^T$ & $c_2=[-1\ \ -1.5]^T$ & $c_3=[1 \ \ 0.5]^T$ & $c_4=[1\ \ 1.5]^T$\\
 \hline
 $A=[1 \ \  1]$&$b=1$&&\\
 \hline\hline
\end{tabular}
\label{tab: Coefficient data for constructed problem}
\end{table}
\end{example}
First, by relative simple computation, we obtain optimal solutions
for each objective function which is minimized independently. The
corresponding optimal numerical results of each objective function
are given in Table \ref{tab: Optimal numerical results for each
objective function of constructed problem}.
\begin{table}[htbp] \centering \caption{Optimal numerical results
for each objective function of Example \ref{constructed
problem}}\scriptsize
\begin{tabular}{ lllllllllllllllllllllllllll }
 \hline\hline
       &$F_1(x^*)$ &&&&  &$F_2(x^*$) &&&& &$F_3(x^*$) &&&& &$F_4(x^*$)\\
 \hline
 FV  &$5.6667$          &&&&        &$-5$   &&&&     &$1$ &&&&   &$1$\\
 $x^*$ &$(0.6667\  0.3333)^T$  &&&& &$(0\ 1)^T$ &&&& &$(0\ 1)^T$ &&&& &$(1\ 0)^T$\\
 \hline\hline
\end{tabular}
\label{tab: Optimal numerical results for each objective function of
constructed problem}
\end{table}

In Table \ref{tab: Optimal numerical results for each objective
function of constructed problem}, the labels FV and $x^*$ denotes
optimal values and optimal solutions corresponding to each objective
function, respectively. The results in Table \ref{tab: Optimal
numerical results for each objective function of constructed
problem} show that the objective functions $F_2(x)$ and $F_3(x)$
have the same optimal solution, which is different from the other
two functions optimal solutions. These imply that we can not find a
single solution that simultaneously optimizes each objective
function. Moreover, note that these four optimal solutions
corresponding to each objective function are all weakly Pareto
optimal solutions for Example \ref{constructed problem}.
\begin{table}[htbp]
\centering \caption{Optimal numerical results for Example
\ref{constructed problem} in (DNNP)}\scriptsize
\begin{tabular}{ llllllllllllllll }
\hline\hline
 $\lambda$ &FV &$x^*$ &$X^*$\\
 \hline
 $(0.5472 \   0.1386  \  0.1493 \   0.1649)$
 &$\left[\begin{array}{ll}5.8345\\-2.8494\\2.4106\\2.3849\end{array}\right]$
 &$\left[\begin{array}{ll}0.4301\\   0.5699\end{array}\right]$
 &$\left[\begin{array}{ll}0.1850  &
 0.2451\\    0.2451  &  0.3248\end{array}\right]$\\
\hline
 $(0.3500\    0.1966\    0.2511\    0.2023)$
 &$\left[\begin{array}{ll}7.0000\\-5.0000\\1.0000\\3.0000\end{array}\right]$
 &$\left[\begin{array}{ll} 0.0000\\    1.0000\end{array}\right]$
 &$\left[\begin{array}{ll} 0.0000  &  0.0000\\
    0.0000 &   1.0000\end{array}\right]$\\
\hline
 $(0.0759\    0.0540\    0.5308\    0.3394)$
 &$\left[\begin{array}{ll}6.0000\\0.0000\\ 2.0000\\1.0000\end{array}\right]$
 &$\left[\begin{array}{ll} 1.0000\\    0.0000\end{array}\right]$
 &$\left[\begin{array}{ll} 1.0000  &  0.0000\\
    0.0000 &   0.0000\end{array}\right]$\\
\hline
 $(0.2417\    0.4039\    0.0965\    0.2579)$
 &$\left[\begin{array}{ll}5.6670\\-1.6105\\2.5513\\1.8625\end{array}\right]$
 &$\left[\begin{array}{ll} 0.6779\\    0.3221\end{array}\right]$
 &$\left[\begin{array}{ll} 0.4596  &  0.2183\\
    0.2183 &   0.1037\end{array}\right]$\\
\hline
 $(0.5752\    0.0598\    0.2348\    0.1302)$
 &$\left[\begin{array}{ll} 5.8570\\ -2.9261\\ 2.3857\\2.4132\end{array}\right]$
 &$\left[\begin{array}{ll}  0.4148\\    0.5852\end{array}\right]$
 &$\left[\begin{array}{ll} 0.1720 &   0.2427\\
    0.2427 &   0.3425\end{array}\right]$\\
\hline
 $( 0.0430\    0.1690\    0.6491\    0.1389)$
 &$\left[\begin{array}{ll}  7.0000\\  -5.0000\\1.0000\\3.0000\end{array}\right]$
 &$\left[\begin{array}{ll} 0.0000\\    1.0000\end{array}\right]$
 &$\left[\begin{array}{ll} 0.0000   & 0.0000\\
   0.0000  &  1.0000\end{array}\right]$\\
\hline
 $(0.2259\    0.1707\    0.2277\    0.3757)$
 &$\left[\begin{array}{ll}  6.0000\\  0.0000\\2.0000\\1.0000\end{array}\right]$
 &$\left[\begin{array}{ll} 1.0000\\    0.0000\end{array}\right]$
 &$\left[\begin{array}{ll} 1.0000   & 0.0000\\
   0.0000  &  0.0000\end{array}\right]$\\
\hline
 $(0.2920\    0.4317\    0.0155\    0.2609)$
 &$\left[\begin{array}{ll}  5.7480\\ -2.4901\\ 2.5020\\ 2.2460\end{array}\right]$
 &$\left[\begin{array}{ll} 0.5020\\    0.4980\end{array}\right]$
 &$\left[\begin{array}{ll}  0.2520  &  0.2500\\
    0.2500  &  0.2480\end{array}\right]$\\
\hline
 $(0.2607\    0.5944\    0.0225\    0.1224)$
 &$\left[\begin{array}{ll}  5.7480\\ -2.4901\\ 2.5020\\ 2.2460\end{array}\right]$
 &$\left[\begin{array}{ll} 0.5020\\    0.4980\end{array}\right]$
 &$\left[\begin{array}{ll}  0.2520  &  0.2500\\
    0.2500  &  0.2480\end{array}\right]$\\
\hline\hline
\end{tabular}
\label{tab: Optimal numerical results for constructed problem in
(DNNP)}
\end{table}

It is very easy to verify that the given four objective functions
are all nonconvex by using MATLAB function \texttt{eig}.
Furthermore, we obtain problem (SOQP) is nonconvex with
corresponding coefficients $\lambda$ which proposed in Table
\ref{tab: Optimal numerical results for constructed problem in
(DNNP)} by using \texttt{eig}. Hence, we use our method to solve
this problem. The corresponding optimal numerical results are
reported in Table \ref{tab: Optimal numerical results for
constructed problem in (DNNP)}. In Table \ref{tab: Optimal numerical
results for constructed problem in (DNNP)}, Example \ref{constructed
problem} is transformed into problem (DNNP), and then is solved with
nine different weighted coefficient $\lambda$. The results of $x^*$
and $X^*$ in Table \ref{tab: Optimal numerical results for
constructed problem in (DNNP)} show that $X^*=x^*(x^*)^T$ holds for
nine different weighted coefficients. Thus, from Theorem \ref{theo:
optimal solution between problems} and Remark \ref{rem: optimal
solution between problems}, it holds that each optimal solution
$x^*$ in Table \ref{tab: Optimal numerical results for constructed
problem in (DNNP)} also is Pareto optimal solution for Example
\ref{constructed problem}. Moreover, we obtain weakly Pareto optimal
solutions for Example \ref{constructed problem} when the weighted
coefficient $\lambda$ is chosen appropriately. For instance, if
$\lambda$ is chosen as $(0.3500, 0.1966, 0.2511, 0.2023)$ or $(
0.0430, 0.1690, 0.6491, 0.1389)$, then weakly Pareto optimal
solution $(0.0000, 1.0000)^T$ is obtained by using our method.

\begin{example}\label{random problem} In this test problem, we set $n=5$, $m=4$ and
$p=5$. The corresponding coefficients $Q_i,\ c_i,\ i=1,\ldots,5$ and
$A,\ b$ are generated by the functions
\texttt{tril(randn(n,n),-1)+triu(randn(n,n)',0)},
\texttt{randn(n,1)}, \texttt{randn(n,m)} and \\
\texttt{randn(m,1)},
respectively, and the details can be seen in Table \ref{tab:
Coefficient data for random problem}.
\end{example}
\begin{table}[htbp]
\centering \caption{Coefficients for Example \ref{random
problem}}\scriptsize
\begin{tabular}{ lllllllllllllllllllllllllll }
 \hline\hline
   $Q_1=\left[\begin{array}{lllllll}0.4056   &-1.4193   &-0.7294    &1.1473
&0.5979\\
-1.4193   &-2.2033   &-0.5712    &0.2140    &0.9424\\
-0.7294   &-0.5712    &0.3062   &-1.1723   &-0.9610\\
1.1473    &0.2140   &-1.1723   &-0.9000   &-0.2857\\
0.5979    &0.9424   &-0.9610   &-0.2857 &0.6103\end{array}\right]$
&$c_1=\left[\begin{array}{ll}0.7907\\  0.2877\\  0.0032\\  0.3656\\  3.5267\end{array}\right]$\\
 \hline
  $Q_2=\left[\begin{array}{lllll}0.0591   &-1.4669   &-1.6258   &-1.9648    &2.6052\\
    -1.4669    &0.2570   &-0.9742   &-1.1464    &0.5476\\
    -1.6258   &-0.9742   &-0.4494   &-0.0843   &-1.9920\\
-1.9648   &-1.1464   &-0.0843   &-0.3909    &0.4092\\
 2.6052 &0.5476
&-1.9920    &0.4092    &1.3018\end{array}\right]$&$c_2=\left[\begin{array}{ll}-0.1124\\  -1.5566\\  1.9151\\  0.6098\\  -0.6479\end{array}\right]$\\
 \hline
  $Q_3=\left[\begin{array}{lllll}-0.5936    &0.4364   &-0.5044    &0.1021
&1.1963\\
0.4364   &-1.0368   &-0.8571   &-0.1699   &-0.1917\\
-0.5044   &-0.8571    &1.2665   &-0.2512   &-0.2046\\
     0.1021   &-0.1699   &-0.2512   &-0.3862    &0.5256\\
1.1963   &-0.1917   &-0.2046    &0.5256 &0.8175\end{array}\right]$
&$c_3=\left[\begin{array}{ll}2.6173\\  0.5510\\  0.2942\\  -0.7778\\  -1.0649\end{array}\right]$\\
\hline $Q_4=\left[\begin{array}{llllll}0.4902    &0.7653 &0.7783
&-1.4803
&0.5404\\
 0.7653   &-0.7603   &-0.6936    &1.2815   &-0.8097\\
  0.7783   &-0.6936    &2.0108    &0.0256    &0.3083\\
-1.4803    &1.2815    &0.0256    &0.5301   &-0.9521\\
 0.5404   &-0.8097    &0.3083   &-0.9521
 &-0.4506\end{array}\right]$
 &$c_4=\left[\begin{array}{ll}-1.7684\\  -0.4229\\  -1.0531\\  0.6478\\  -0.3176\end{array}\right]$\\
 \hline
$Q_5=\left[\begin{array}{lllllll}0.1092   &-0.2506   &-0.1899   &-1.0329   &-0.3233\\
    -0.2506    &1.7447   &-1.1605    &2.3774    &1.5261\\
    -0.1899   &-1.1605   &-0.6987    &0.8328   &-0.6946\\
    -1.0329    &2.3774    &0.8328    &0.8967    &0.5047\\
    -0.3233    &1.5261   &-0.6946    &0.5047
    &1.1867\end{array}\right]$
    &$c_5=\left[\begin{array}{ll}1.7690\\  1.5106\\  0.1640\\  -0.2828\\  1.1522\end{array}\right]$\\
\hline $A=\left[\begin{array}{llllll} -1.1465   &-0.6718 &0.5530
&0.2695
&1.0393\\
    0.6737    &0.5756   &-0.4234   &-2.5644    &0.9109\\
   -0.6691   &-0.7781    &0.3616    &0.4659   &-0.2397\\
   -0.4003   &-1.0636   &-0.3519    &1.8536
   &0.1810\end{array}\right]$
   &$b=\left[\begin{array}{ll}0.2442 \\ 0.0964 \\ -0.8305 \\ -0.3523\end{array}\right]$\\
 \hline\hline
\end{tabular}
\label{tab: Coefficient data for random problem}
\end{table}
In order to verify that whether the given objective functions in
Table \ref{tab: Coefficient data for random problem} are nonconvex
functions or not, the corresponding eigenvalues for $Q_i,\
i=1,\ldots,5$ are given in Table \ref{tab: Eigenvalues}. The results
in Table \ref{tab: Eigenvalues} show that five objective functions
are all nonconvex. Thus, we will use the proposed method to solve
this problem. The results are given in Tables \ref{tab: Optimal
solutions of single objective problem
 for random problem} and
\ref{tab: Numerical results for random problem}.
\begin{table}[htbp]
\centering \caption{Eigenvalues for $Q_i,\ i=1,\ldots,5$ in Example
\ref{random problem}}\scriptsize
\begin{tabular}{ lllllllllllllllllllllllllll }
 \hline\hline
 Quadratic matrices    &Eigenvalues\\
 \hline
 $Q_1$       &$(-3.3892,   -1.8503,   -0.0780,    1.1180,    2.4184)$\\
 \hline
 $Q_2$       &$( -4.2248,   -1.8386,    0.7234,    1.3339,    4.7837)$\\
 \hline
 $Q_3$       &$( -1.6777,   -1.1336,   -0.3917,    1.1973,    2.0731)$\\
 \hline
 $Q_4$       &$(-2.5335,   -1.0738,    0.6848,    1.6750,    3.0677)$\\
 \hline
 $Q_5$      &$( -2.3512,   -0.6382,    0.1606,    1.3837,    4.6838)$\\
\hline\hline
\end{tabular}
\label{tab: Eigenvalues}
\end{table}
\begin{table}[htbp]
\centering \caption{Optimal numerical results for each objective
function of Example \ref{random problem}}\scriptsize
\begin{tabular}{ lllllllllllllllllllllllllll }
 \hline\hline
              &&&&&FV   &&&&&$x^*$ \\
 \hline
 $F_1(x^*)$      &&&&&$2.1900$      &&&&&$(0,\     1.2165,\     0.2253,\     0.4747,\     0.7783)^T$ \\
 \hline
 $F_2(x^*)$      &&&&&$-9.2542$
 &&&&&$(2.7216,\         0,\    2.7773,\    0.7727,\    1.5592)^T$     \\
 \hline
 $F_3(x^*)$      &&&&&$-1.3755$
 &&&&&$( 0,\    1.2165,\    0.2253,\    0.4747,\    0.7783)^T$     \\
 \hline
 $F_4(x^*)$      &&&&&$-2.2276$
 &&&&&$(0.4915,\    0.9969,\    0.6861,\    0.5285,\    0.9193)^T$     \\
 \hline
 $F_5(x^*)$      &&&&&$0.6759$
 &&&&&$( 2.7216,\         0,,\    2.7773,\    0.7727,\    1.5592)^T$     \\
\hline\hline
\end{tabular}
\label{tab: Optimal solutions of single objective problem
 for random problem}
\end{table}

Table \ref{tab: Optimal solutions of single objective problem
 for random problem} shows that the optimal numerical results for
 each objective function of Example \ref{random problem}.
 The results in Table \ref{tab: Optimal solutions of single objective problem
 for random problem} show that we can not find a single solution
that simultaneously optimizes these five objective functions. Note
that these five optimal solutions are also weakly Pareto optimal
solutions for Example \ref{random problem}.

\begin{table}[htbp]
\centering \caption{Optimal numerical results for Example
\ref{random problem} in (DNNP)}\scriptsize
\begin{tabular}{ lllllllllllllllllllllllllll }
 \hline\hline
   $\lambda$        &FV            &$x^*$                 &$X^*$\\
 \hline
   $\left[\begin{array}{ll}0.3317\\
    0.1522\\
    0.3480\\
    0.1217\\
    0.0464\end{array}\right]$
   &$\left[\begin{array}{llllll}
                     2.1900\\
                    -1.7656\\
                    -1.3755\\
                     -1.7914\\
                     7.0248\end{array}\right]$
  &$\left[\begin{array}{ll}
                                       0.0000\\1.2165\\
                                       0.2253\\0.4747\\
                                       0.7783\end{array}\right]$
  &$\left[\begin{array}{lllll}
               0.0000& 0.0000& 0.0000& 0.0000& 0.0000\\
               0.0000& 1.4799& 0.2741& 0.5775& 0.9469\\
               0.0000& 0.2741& 0.0508& 0.1070& 0.1753\\
               0.0000& 0.5775& 0.1070& 0.2254& 0.3695\\
               0.0000& 0.9469& 0.1753& 0.3695& 0.6058\end{array}\right]$\\
 \hline
  $\left[\begin{array}{ll}
   0.0712\\
    0.1820\\
    0.0930\\
    0.4635\\
    0.1903\end{array}\right]$
  &$\left[\begin{array}{lll}
                    2.2944\\
                      -2.3834\\
                      0.1927\\
                      -2.2196\\
                       6.2356\end{array}
                      \right]$
  &$\left[\begin{array}{ll} 0.4250\\
    1.0266\\
    0.6238\\
    0.5213\\
    0.9003\end{array}\right]$
  &$\left[\begin{array}{lllllll}
               0.1806 &   0.4363  &  0.2651 &   0.2215  &  0.3826\\
    0.4363 &   1.0538 &   0.6403  &  0.5351  &  0.9242\\
    0.2651 &   0.6403 &   0.3891  &  0.3252  &  0.5616\\
    0.2215 &   0.5351 &   0.3252  &  0.2717  &  0.4693\\
    0.3826 &   0.9242 &   0.5616  &  0.4693  &  0.8105\end{array}\right]$\\
 \hline
 $\left[\begin{array}{ll}
 0.0290\\
    0.6371\\
    0.0595\\
    0.1692\\
    0.1052\end{array}\right]$
 &$\left[\begin{array}{ll}
                    2.3045\\
                    -2.4426\\
                    0.3234\\
                    -2.2257\\
                    6.1681\end{array}
                    \right]$
 &$\left[\begin{array}{ll}0.4595\\
    1.0111\\
    0.6562\\
    0.5251\\
    0.9102\end{array}\right]$
 &$\left[\begin{array}{llllll}
               0.2111  &  0.4646  &  0.3015  &  0.2413  &  0.4182\\
    0.4646  &  1.0224  &  0.6635  &  0.5309  &  0.9203\\
    0.3015  &  0.6635  &  0.4305  &  0.3445  &  0.5972\\
    0.2413  &  0.5309  &  0.3445  &  0.2757  &  0.4779\\
    0.4182  &  0.9203   & 0.5972   & 0.4779   & 0.8284\end{array}\right]$\\
 \hline
  $\left[\begin{array}{ll}
  0.0472\\
    0.0559\\
    0.0715\\
    0.0157\\
    0.8098\end{array}\right]$
  &$\left[\begin{array}{ll}
                     3.5136\\
                     -9.2542\\
                     9.9301\\
                      6.7545\\
                     0.6759\end{array}\right]$
  &$\left[\begin{array}{ll}2.7216\\
    0.0000\\
    2.7773\\
    0.7727\\
    1.5592\end{array}\right]$
  &$\left[\begin{array}{llllll}
               7.4071 &   0.0000 &   7.5586  &  2.1030 &   4.2434\\
    0.0000   & 0.0000  &  0.0000 &   0.0000  &  0.0000\\
    7.5586   & 0.0000  &  7.7132  &  2.1460  &  4.3302\\
    2.1030   & 0.0000  &  2.1460  &  0.5971  &  1.2048\\
    4.2434   & 0.0000  &  4.3302  &  1.2048  &  2.4310\end{array}\right]$\\
 \hline
  $\left[\begin{array}{ll}
   0.1911\\
    0.0504\\
    0.5484\\
    0.0095\\
    0.2006\end{array}\right]$
  &$\left[\begin{array}{ll}
                    2.1900\\
                    -1.7656\\
                     -1.3755\\
                     -1.7914\\
                   7.0248\end{array}\right]$
  &$\left[\begin{array}{ll}0.0000\\
    1.2165\\
    0.2253\\
    0.4747\\
    0.7783\end{array}\right]$
  &$\left[\begin{array}{lllll}
               0.0000 &   0.0000  &  0.0000  &  0.0000 &   0.0000\\
    0.0000 &   1.4799  &  0.2741 &   0.5775  &  0.9469\\
    0.0000 &   0.2741 &   0.0508  &  0.1070 &   0.1753\\
    0.0000  &  0.5775  &  0.1070  &  0.2254  &  0.3695\\
    0.0000  &  0.9469  &  0.1753 &   0.3695  &  0.6058\end{array}\right]$\\
 \hline
  $\left[\begin{array}{ll}
   0.2166\\
    0.6291\\
    0.0149\\
    0.0433\\
    0.0961\end{array}\right]$
  &$\left[\begin{array}{ll}
                       3.5136\\
                      -9.2542\\
                       9.9301\\
                       6.7545\\
                       0.6759\end{array}\right]$
  &$\left[\begin{array}{ll}2.7216\\
    0.0000\\
    2.7773\\
    0.7727\\
    1.5592\end{array}\right]$
  &$\left[\begin{array}{lllll}
               7.4071  &  0.0000 &   7.5586  &  2.1030  &  4.2434\\
    0.0000 &   0.0000  &  0.0000 &   0.0000  &  0.0000\\
    7.5586 &   0.0000  &  7.7132  &  2.1460  &  4.3302\\
    2.1030  &  0.0000  &  2.1460  &  0.5971 &   1.2048\\
    4.2434  &  0.0000  &  4.3302  &  1.2048  &  2.4310\end{array}\right]$\\
  \hline
  $\left[\begin{array}{ll}
   0.2583\\
    0.0041\\
    0.0905\\
    0.5958\\
    0.0513
\end{array}\right]$
  &$\left[\begin{array}{ll}
                       2.2080\\
                      -1.8738\\
                        -1.0651\\
                       -1.9303\\
                       6.8716\end{array}\right]$
  &$\left[\begin{array}{ll}0.0857\\
    1.1782\\
    0.3057\\
    0.4841\\
    0.8029\end{array}\right]$
  &$\left[\begin{array}{lllll}
               0.0073 &   0.1010 &   0.0262 &   0.0415 &   0.0688\\
    0.1010  &  1.3882  &  0.3601 &   0.5704 &   0.9460\\
    0.0262  &  0.3601  &  0.0934 &   0.1480 &   0.2454\\
    0.0415  &  0.5704 &   0.1480 &   0.2344 &   0.3887\\
    0.0688  &  0.9460  &  0.2454  &  0.3887 &   0.6447
\end{array}\right]$\\
\hline\hline
\end{tabular}
\label{tab: Numerical results for random problem}
\end{table}

Note that we can verify that problem (SOQP) is nonconvex with seven
different choices of weighted coefficient $\lambda$ which show in
Table \ref{tab: Numerical results for random problem}. So, Example
\ref{random problem} can be solved by using the proposed method. The
results of $x^*$ and $X^*$ in Table \ref{tab: Numerical results for
random problem} show that $X^*=x^*(x^*)^T$ holds for these seven
different cases of weighted coefficient $\lambda$. Hence, we can
conclude that each $x^*$ in Table \ref{tab: Numerical results for
random problem} also is Pareto optimal solution for Example
\ref{random problem}. Furthermore, compare the results of FV and
$x^*$ in Tables \ref{tab: Optimal solutions of single objective
problem
 for random problem} and \ref{tab: Numerical results for random
 problem}, note that some weakly Pareto optimal solutions of Example \ref{random problem} can be
 obtained by using our method. For example, if $\lambda=(0.3317, 0.1522, 0.3480, 0.1217,
 0.0464)$, then we obtain weakly Pareto optimal solution $(0.0000,
 1.2165, 0.2253, 0.4747, 0.7783)^T$ for Example \ref{random problem}.
\begin{remark}\label{rem: unmerical results}
The results for Example \ref{constructed problem} and Example
\ref{random problem} imply that we not only
 obtain Pareto optimal solutions, but
 also obtain some weakly Pareto optimal solutions for original problem with appropriate choices of weighted coefficient $\lambda$.
 Summarizing these results, we can conclude that our method is
 effective for solving some problems (MOQP).
\end{remark}
\begin{example}\label{Portfolio Selection Problem} (Portfolio Selection Problem) This problem is taken from
\cite{xl2002}. It is a practical portfolio selection problem in
which objective function $F(x)$ has the following expression
$$F(x)=(\frac{1}{2}x^TQ_1x,\ -c_2^Tx,\ -c_3^Tx)^T,$$
where symmetric matrix $Q_1$ is called the risk matrix, $c_2$
denotes the return rate vector, $c_3$ is a given weighting vector
and its element is a function of corresponding security liquidity.
The corresponding coefficients are given in Table \ref{tab: data for
Portfolio Selection Problem }.
\begin{table}[htbp]
\centering \caption{Coefficients for Example \ref{Portfolio
Selection Problem}}\scriptsize
\begin{tabular}{ lllllllllllllllllllllllllll }
 \hline\hline
$Q_1=\left[\begin{array}{llllllllllllllllllllll}
12.35  &-8.64  &9.88  &-8.43  &9.42  &-1.47  &4.29  &-0.96  &15.2  &0\\
 -8.64 &6.42  &-8.15  &5.13   &4.56  &5.83  &-6.72  &0.78   &3.42  &0\\
9.88  &-8.15  &18.27  &-8.14  &2.32  &-4.12 &-1.85  &1.34  &0.795  &0\\
-8.43 &5.13  &-8.14  &12.6   &-8.43  &7.47  &-4.67  &-0.679 &8.09  &0\\
9.42  &4.56  &2.32  &-8.43  &17.07  &7.33  &-3.18  &-0.547  &0.323 &0\\
-1.47  &5.83  &-4.12 &7.47  &7.33  &13.38  &-3.84  &-0.478  &0.616 &0\\
4.29  &-6.27  &-1.85  &-4.67  &-3.18  &-3.84  &3.41  &0.107  &0.464 &0\\
-0.96  &-0.78  &1.34  &-0.679  &-0.547  &-0.478  &0.107  &1.33 &11.23 &0\\
15.2  &3.42  &0.795  &0.809  &0.323  &0.616  &0.464  &11.23  &2.45  &0\\
0  &0  &0  &0  &0  &0  &0  &0  &0  &0 \end{array}\right]$\\
\hline $c_2=[11.11\ \  17.78\ \ 2.22\ \ 18.28\ \ 12.6\ \ 10.15\ \
6.72\ \ 5.75\ \ 6.1\ \ 2.25]^T$\\
\hline $c_3=[0.069\ \ 0.092\ \ 0.046\ \ 0.096\ \ 0.086\ \ 0.068\ \
0.06\ \ 0.145\ \
0.151\ \  0.197]^T$\\
\hline
$A=[1\ 1\ 1\ 1\ 1\ 1\ 1\ 1\ 1\ 1]\ \ \ b=1$\\
 \hline\hline
\end{tabular}
\label{tab: data for Portfolio Selection Problem }
\end{table}
\end{example}
Note that the objective functions contain only one quadratic
function $\frac{1}{2}x^TQ_1x$, by using \texttt{eig} function of
MATLAB, it is easy to verify that function $\frac{1}{2}x^TQ_1x$ is
nonconvex. Thus, we use the proposed method to solve this problem.
The corresponding optimal results are given in Table \ref{tab:
Optimal values for portfolio selection problem I}.
\begin{table}[htbp]
\centering \caption{Optimal numerical results for Example
\ref{Portfolio Selection Problem} }\scriptsize
\begin{tabular}{ lllllllllllllllllllllllllll }
 \hline\hline
$\lambda$                   &FV  &$x^*$  \\
 \hline
   $\left[\begin{array}{ll}
   0.3482\\
    0.1655\\
    0.4863\end{array}\right]$
   &$\left[\begin{array}{ll}
    0.1894\\
    -15.6800\\
    -0.0850\end{array}\right]$
   &$(0.3240,
    0.5567,
    0.0000,
    0.1194,
    0.0000,
    0.0000,
    0.0000,
    0.0000,
    0.0000,
    0.0000)^T$\\
 \hline
 $\left[\begin{array}{ll}
  0.5141\\
    0.1931\\
    0.2927
\end{array}\right]$
 &$\left[\begin{array}{ll}
 0.1320\\
 -15.5462\\
   -0.0846\end{array}\right]$
 &$(0.3433,
    0.5466,
    0.0000,
    0.1101,
    0.0000,
    0.0000,
    0.0000,
    0.0000,
    0.0000,
    0.0000)^T$\\
 \hline
 $\left[\begin{array}{ll}
 0.6167\\
    0.3087\\
    0.0746\end{array}\right]$
 &$\left[\begin{array}{ll}
  0.2026\\
  -15.7070\\
   -0.0851\end{array}\right]$
 &$(0.3200,
    0.5590,
    0.0000,
    0.1210,
    0.0000,
    0.0000,
    0.0000,
    0.0000,
    0.0000,
    0.0000)^T$\\
\hline\hline
\end{tabular}
\label{tab: Optimal values for portfolio selection problem I}
\end{table}

In Table \ref{tab: Optimal values for portfolio selection problem
I}, Example \ref{Portfolio Selection Problem} is solved with three
different values of weight $\lambda$. The first element and the
absolute value of the second element of FV in Table \ref{tab:
Optimal values for portfolio selection problem I} denote the
expectation risk and return, respectively. The results in Table
\ref{tab: Optimal values for portfolio selection problem I} show
that we obtain the lower risk and higher return when weight
$\lambda$ is chosen appropriately. Moreover, we also compare with
the results in \cite{xl2002}, which are show in Table \ref{tab:
Optimal results for portfolio selection problem I in xu_li}.
\begin{table}[htbp]
\centering \caption{Optimal numerical results for Example
\ref{Portfolio Selection Problem} in \cite{xl2002} }\scriptsize
\begin{tabular}{ lllllllllllllllllllllllllll }
 \hline\hline
                 FV  &$x^*$  \\
 \hline
   $\left[\begin{array}{ll}
   0.6823\\
    -7.5079\\
    -0.1343\end{array}\right]$
   &$(0.0453,
    0.0706,
    0.0273,
    0.0703,
    0.078,
    0.045,
    0.0534,
    0.169,
    0.1665,
    0.2746)^T$\\
\hline\hline
\end{tabular}
\label{tab: Optimal results for portfolio selection problem I in
xu_li}
\end{table}

The results of FV in Tables \ref{tab: Optimal values for portfolio
selection problem I} further imply that the risk and return are more
comparable with the results in Table \ref{tab: Optimal results for
portfolio selection problem I in xu_li}. For example, when
$\lambda=(0.3482,    0.1655, 0.4863)$, we obtain lower risk $0.1894$
and higher return $15.6800$, which are more comparable with the
results of \cite{xl2002} $0.6823$ and $7.5079$, respectively.
Furthermore, note that the optimal solutions $x^*$ obtained by using
our method are sparse, which imply that we mainly focus on some
kinds of important stocks. Hence, we can put together the limited
money, and invest these money in some of important stocks to obtain
more satisfied return with lower risk.

We also notice that there are infinite choices of weight $\lambda$,
and not all choices of weight $\lambda$ are reasonable for each
investor. How to select weight $\lambda$ to lower risk and higher
return depends on the investors's preference. Therefore, it is
reasonable that investors participate in decision making and
continuously revise their preferences according to practical
conditions. This also shows that our method is promising in solving
portfolio selection problems.

\section{Concluding remarks}\label{sec: Concluding remarks}
In this paper, a class of (MOQP) problems is discussed. By using the
linear weighted sum method to deal with quadratic objective
functions, problem (MOQP) is transformed into problem (SOQP), which
is nonconvex in general. Then, taking advantage of lifting
techniques, problem (SOQP) is equivalently reformulated as problem
(CP) which is a convex programming problem but NP-hard in general. A
computable relaxed convex problem (DNNP) for problem (CP) is
obtained by using doubly nonnegative relaxation method. Moreover,
based on the characteristics of optimal solutions for problem
(DNNP), a sufficient condition for (weakly) Pareto optimal solutions
for problem (MOQP) is proposed. Finally, the numerical results of
two problems and a practical portfolio selection problem show that
the proposed method is effective and promising.


\medskip
Received xxxx 20xx; revised xxxx 20xx.
\medskip

\end{document}